\newcolumntype{L}{>{$}l<{$}}
\newcommand{\one}{\mathbf{1}}
\nc{\ot}{\otimes}
\DeclareMathOperator{\Rep}{Rep}
\DeclareMathOperator{\Stab}{Stab}
\DeclareMathOperator{\Ising}{Ising}
\DeclareMathOperator{\sVec}{sVec}
\DeclareMathOperator{\Sem}{Sem}
\newcommand{\mcB}{\mathcal{B}}
\newcommand{\mcC}{\mathcal{C}}
\newcommand{\mcD}{\mathcal{D}}
\DeclareMathOperator{\Aut}{Aut}
\newcommand{\tY}{\tilde{Y}}
\newcommand{\tX}{\tilde{X}}
\newcommand{\pz}{(0)}
\newcommand{\po}{(1)}
\begin{document}
\title{Metaplectic Categories, Gauging and Property F}
\author{Paul Gustafson}
\email{paul.gustafson@gmail.com}
\author{Eric C. Rowell}
\email{rowell@math.tamu.edu}
\author{Yuze Ruan}
\email{yuze.ruan@gmail.com}

\thanks{The first two authors were partially supported by US NSF grant DMS-1664359.  Part of this work was carried out while E.C.R. was visiting BICMR, Peking University and he gratefully acknowledges the support of that institution.}
\begin{abstract} $N$-Metaplectic categories, unitary modular categories with the same fusion rules as $SO(N)_2$, are prototypical examples of weakly integral modular categories generalizing the model for the Ising anyons.  As such, a conjecture of the second author would imply that images of the braid group representations associated with metaplectic categories are finite groups, i.e. have property $F$.  While it was recently shown that $SO(N)_2$ itself has property $F$, proving property $F$ for the more general class of metaplectic modular categories is an open problem. We verify this conjecture for $N$-metaplectic modular categories when $N$ is odd, exploiting their classification and enumeration to relate them to $SO(N)_2$.   In another direction, we prove that when $N$ is divisible by $8$ the $N$-metaplectic categories have $3$ non-trivial bosons, and the boson condensation procedure applied to 2 of these bosons yields $\frac{N}{4}$-metaplectic categories.  Otherwise stated: any $8k$-metaplectic category is a $\Z_2$-gauging of a $2k$-metaplectic category, so that the $N$ even metaplectic categories lie towers of $\Z_2$-gaugings commencing with $2k$- or $4k$-metaplectic categories with $k$ odd.
\end{abstract}
\maketitle
\section{Introduction}
$N$-Metaplectic  categories are a major source of examples of weakly integral modular categories.  As natural generalizations of the Ising anyons \cite{hnw2} they are important examples in the study of topological phases of matter and their applications \cite{hnw} to quantum computation.  They are defined as unitary modular categories with the same fusion rules as those obtained from the semisimple quotients $SO(N)_2$\footnote{This notation is borrowed from conformal field theory.  A more suitable notation might be $Spin(N)_2$ since the objects analogous to the spinor representations are included.} of $\Rep(U_q \mathfrak{so}_N)$ where $q=e^{\pi i/N}$ for $N$ even and $q=e^{\pi i/(2N)}$ for $N$ odd (see \cite{NR} for details of that construction). In general an $N$-metaplectic category has dimension $4N$ and has simple objects of dimension $1,2$ and $\sqrt{N}$ ($N$ odd) or $\sqrt{\frac{N}{2}}$ ($N$ even).  In the case $N$ is odd $N$-metaplectic categories are relative centers of Tambara-Yamagami categories \cite{GNN}. Recently, a complete classification and enumeration of $N$-metaplectic categories has been completed \cite{acrw,BPR,BGPR}.  In addition, the  $N$-metaplectic modular categories coming from quantum groups, i.e. $SO(N)_2$, have been shown to have finite braid group image \cite{RW} verifying the property $F$ conjecture for this subset of metaplectic categories (see \cite{NR}).

In this article we advance our understanding of $N$-metaplectic modular categories in two ways. First we extend the proof of property $F$ from $SO(N)_2$ with $N$ odd to all odd $N$-metaplectic categories. This is achieved as follows. In \cite{acrw} it is shown that for $N$ odd there are precisely $2^{s+1}$ inequivalent $N$-metaplectic categories where $s$ is the number of prime factors of $N$.  We show that each of these may be obtained from $SO(N)_2$ by Galois conjugation and twisting, which then allows us to describe the images of all $N$-metaplectic $\mcB_n$-representations in terms of those obtained from $SO(N)_2$. Although we believe this technique should apply to the even $N$ cases as well, there are some further technicalities that have not been worked out yet. On the other hand our second result shows that even $N$-metaplectic categories appear in towers of gaugings.  More precisely we show that if $8 \mid N$ then any $N$-metaplectic modular category is a $\Z_2$-gauging of an $\frac{N}{4}$-metaplectic modular category.  Thus for each odd $k$ there are towers of even $N$-metaplectic categories starting with the $2k$- and $4k$-metaplectic categories.

\section{Preliminaries}\label{prelim}
We assume the reader is familiar with the basic notions in the theory of fusion categories such as spherical and braiding structures and their properties.  Good references for these details are: \cite{EGNObook,eno,BaKi,TuraevBook}.

\subsection{Galois conjugation and twisting}
It is well known that a fusion (or modular or ribbon) category $\mC$ can be defined over a number field $\FF=\QQ(\alpha)$.  That is, the data needed to construct $\mC$ ($6j$-symbols, braiding isomorphisms, twists, mapping class group representations) all lie in a finite Galois extension of $\QQ$. Moreover, if $\sigma$ is a Galois automorphism of $\FF$ then twisting all data by $\sigma$ produces another category $\mC^\sigma$.  Now if $\mC$ is a unitary category, or (possibly more generally) has dimension function taking values in $\RR^+$ then $\mC^\sigma$ may not have this property.  Indeed, a Galois conjugate of a pseudo-unitary category is not generally pseudo-unitary.

On the other hand, any Galois conjugate of a \emph{weakly integral} fusion category is pseudo-unitary \cite[Proposition 8.24]{eno}.
Thus, by
\cite[Propositions 8.23]{eno} any \emph{weakly integral} fusion category admits a unique spherical structure $j_+$ with respect to which each object has positive dimension.  Moreover, if $\mB$ is the braided fusion category underlying a weakly integral modular category $\mC$ (i.e. forgetting the spherical structure) then $\mB$ equipped with any other choice of spherical structure is again modular (see \cite[Lemma 2.4]{BNRW1}).  In particular, with respect to the unique spherical structure $j_+$ giving $\mB^\sigma$ positive dimensions, $\mB^\sigma_+=(\mB^\sigma,j_+)$ is modular.  Note that while $\mC^\sigma$ and $\mB^\sigma_+$ have the same underlying braided fusion category $\mB^\sigma$, their spherical structures (and therefore $S$ and $T$-matrices) may differ.

These arguments prove the following useful:
\begin{prop} Let $\mC$ be any weakly integral modular category, $\mB$ its underlying braided fusion category, and $\sigma\in Gal(\overline{\QQ}/\QQ)$ a Galois automorphism.  Then there is a unique choice of a spherical structure $j_+$ with respect to which $\mB^\sigma_+=(\mB^\sigma,j_+)$ is a modular category with positive dimensions.
\end{prop}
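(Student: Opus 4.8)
The plan is to assemble the proposition from the three facts recalled just above. First, whether a braided fusion category becomes modular upon equipping it with a spherical structure does not depend on which spherical structure is chosen \cite[Lemma 2.4]{BNRW1}. Second, weak integrality is a property of the fusion ring alone, so it is inherited by $\mB^\sigma$. Third, a weakly integral fusion category is pseudo-unitary and therefore admits a \emph{unique} spherical structure with respect to which every simple object has positive dimension \cite[Propositions 8.23, 8.24]{eno}. The proposition follows by combining these, and the work is mostly bookkeeping.

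In more detail: I would first note that the Galois conjugate $\mC^\sigma$, with \emph{all} of its defining data (including the spherical structure) twisted by $\sigma$, is again a modular category. The pentagon, hexagon and balancing axioms are polynomial identities with integer coefficients, so they persist under $\sigma$; and the unnormalized $S$-matrix of $\mC^\sigma$ is $\sigma$ applied entrywise to that of $\mC$, so its determinant is the $\sigma$-image of a nonzero algebraic number and hence nonzero. Thus the braided fusion category $\mB^\sigma$ underlying $\mC^\sigma$ carries \emph{at least one} spherical structure --- the conjugated one --- for which it is modular.

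Next, $\mB^\sigma$ has the same fusion rules as $\mB$, hence as $\mC$. Since $\mC$ is weakly integral, i.e. $\mathrm{FPdim}(\mC)\in\Z$ (equivalently $\mathrm{FPdim}(X)^2\in\Z$ for each simple $X$), and this depends only on the fusion ring, $\mB^\sigma$ is a weakly integral fusion category. By \cite[Proposition 8.24]{eno} it is pseudo-unitary, so by \cite[Proposition 8.23]{eno} there is a unique spherical structure $j_+$ on $\mB^\sigma$ for which $\dim(X)=\mathrm{FPdim}(X)>0$ for every simple object $X$; this is exactly the existence-and-uniqueness claim of the proposition concerning positivity. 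Finally, applying \cite[Lemma 2.4]{BNRW1} to $\mB^\sigma$: since it is modular for the conjugated spherical structure, it is modular for $j_+$ as well, so $\mB^\sigma_+=(\mB^\sigma,j_+)$ is a modular category with positive dimensions, as required.

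I do not anticipate a real obstacle, since the statement is a corollary of results quoted in the preamble. The only point deserving a line of care is that Galois conjugation genuinely preserves weak integrality --- this is what licenses invoking pseudo-unitarity for $\mB^\sigma$ rather than only for $\mB$ --- and one should remember that $\mC^\sigma$ and $\mB^\sigma_+$ may carry different modular data (in particular different $T$-matrices), so the content of the proposition is the \emph{production} of the renormalized modular structure $\mB^\sigma_+$, not any identification of it with $\mC^\sigma$.
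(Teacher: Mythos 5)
Your proposal is correct and follows essentially the same route as the paper: the paper also derives the proposition by combining pseudo-unitarity of Galois conjugates of weakly integral categories \cite[Proposition 8.24]{eno}, the resulting unique positive spherical structure \cite[Proposition 8.23]{eno}, and the independence of modularity from the choice of spherical structure \cite[Lemma 2.4]{BNRW1}. The extra details you supply (persistence of the polynomial axioms under $\sigma$, invariance of the fusion ring) are left implicit in the paper but are exactly the right justifications.
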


It is worth pointing out that distinct spherical structures on the braided fusion category $\mB$ underlying any modular category $\mC$ are in 1-1 correspondence with invertible self-dual objects of $\mC$ (see e.g. \cite{EGNObook}).

A motivation for this paper is the following:
\begin{conj} The braid group representations associated with any object in a weakly integral braided fusion category has finite image.
\end{conj}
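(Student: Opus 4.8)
The plan is to prove the conjecture in two stages: first reduce to the modular case using the structure of the symmetric center, then attack weakly integral modular categories through their (conjectural) group-theoretical structure, where finiteness is available. Fix a weakly integral braided fusion category $\mathcal{B}$ and an object $X$; we must show the image of the braid group $B_n$ acting on $\mathrm{End}(X^{\otimes n})$ is finite for every $n$. The first step is to dispose of the degenerate part. The M\"uger center $\mathcal{B}'$ (the full subcategory of transparent objects) is a symmetric fusion category, hence super-Tannakian, $\mathcal{B}' \simeq \Rep(G,z)$ for a finite group $G$ and a central element $z$ of order $\leq 2$. In any symmetric category the braiding satisfies $c_{X,X}c_{X,X}=\mathrm{id}$, so the generators of $B_n$ act by involutions and the image factors through a finite (signed) symmetric group; thus $\mathrm{Vec}$, $\Rep(G)$, and $\sVec$ all have property $F$ trivially. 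De-transparentizing then reduces us to a non-degenerate category: if $\mathcal{B}'$ is Tannakian we modularize by de-equivariantizing along $\Rep(G)$ in the sense of Bruguières--M\"uger, and if $\mathcal{B}'$ contains a fermion we pass to the super-modular setting. By the Proposition above, the resulting non-degenerate weakly integral category carries a canonical spherical structure $j_+$ making it a weakly integral \emph{modular} category $\mathcal{C}$. The technical point here is to show that the braid image for $X$ in $\mathcal{B}$ is an extension of the image in $\mathcal{C}$ by a finite group controlled by $\mathcal{B}'$, so that finiteness downstairs forces finiteness upstairs.

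The core step is to reduce a weakly integral modular category $\mathcal{C}$ to a \emph{weakly group-theoretical} one. The key structural input is the conjecture of Etingof--Nikshych--Ostrik that every weakly integral fusion category is weakly group-theoretical, i.e.\ categorically Morita equivalent to a nilpotent fusion category and hence obtained from $\mathrm{Vec}$ by a finite sequence of extensions and equivariantizations. For modular $\mathcal{C}$ this is more tractable than in general: the integral subcategory $\mathcal{C}_{\mathrm{int}}$ generated by the integer-dimensional objects is a modular subcategory, and $\mathcal{C}$ is a faithfully graded extension of $\mathcal{C}_{\mathrm{int}}$, so it suffices to establish weak group-theoreticity for $\mathcal{C}_{\mathrm{int}}$ and to control the grading group. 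The $\mathbb{Z}_2$-gauging and boson-condensation analysis of the present paper supplies exactly this kind of control in the metaplectic case, and the method is designed to generalize: each gauging (respectively condensation) step is a $\mathbb{Z}_2$-extension (respectively de-equivariantization), so that a weakly integral modular category should be expressible as a finite tower of such operations over a pointed or near-pointed base.

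Finally, for a braided weakly group-theoretical category property $F$ is known, by the work of Etingof--Rowell--Witherspoon on group-theoretical braided categories and twisted doubles of finite groups: there the braid group acts through a finite quotient because the relevant $R$-matrix has finite order and the associated $6j$-data are roots of unity lying in a fixed cyclotomic field. Combining this base case with the functoriality of braid images along the tower---each $\mathbb{Z}_2$-gauging or de-equivariantization alters the image only by a finite group, exactly as exploited for the metaplectic towers in this paper---finiteness propagates from the bottom of the tower to $\mathcal{C}$, and then back up to $\mathcal{B}$ by the first step. The main obstacle is the core step: the Etingof--Nikshych--Ostrik conjecture that weakly integral implies weakly group-theoretical is itself open even for modular categories, so a fully general proof must either settle that conjecture in the modular case or replace it with a direct construction of a finite gauging tower obtained from the classification of weakly integral modular categories. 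Until such a structure theorem is in hand, the conjecture can only be verified family by family---precisely as we do here for the odd metaplectic categories---which is why the statement remains a conjecture rather than a theorem.
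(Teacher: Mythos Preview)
The statement is a \emph{conjecture}; the paper does not prove it. It is stated as motivation, and the paper's actual contribution is to verify it for the family of odd $N$-metaplectic categories by a completely different route (showing each such category is obtained from $SO(N)_2$ by Galois conjugation and twisting, operations that manifestly preserve property $F$, and then invoking the known result for $SO(N)_2$).

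Your outline is a plausible strategy, and you rightly flag the main obstruction---the open Etingof--Nikshych--Ostrik question whether weakly integral implies weakly group-theoretical. But there are further genuine gaps you do not acknowledge. First, property $F$ is \emph{not} known for braided weakly group-theoretical categories: Etingof--Rowell--Witherspoon treats only the \emph{group-theoretical} case (Morita equivalent to pointed), not the strictly larger weakly group-theoretical class (Morita equivalent to nilpotent). Second, the assertion that each gauging step ``alters the image only by a finite group'' is precisely what the paper itself identifies as missing: it says that, granting ENO, ``to prove one direction of the property $F$ conjecture it would be enough to prove that $G$-gauging preserves property $F$''---and no such result is available. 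Third, your claim that the integral subcategory $\mathcal{C}_{\mathrm{int}}$ of a weakly integral modular category is itself modular is false already for $\Ising$, where $\mathcal{C}_{\mathrm{int}}=\langle\one,\psi\rangle\cong\sVec$ is symmetric, not modular; so the proposed reduction step does not go through. Thus even modulo the ENO conjecture your argument does not close. Your final sentence---that the statement remains a conjecture verifiable only family by family---is the correct conclusion, but several of the intermediate steps overstate what is actually known.
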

An object $X\in\mB$ so that the corresponding braid group representations all have finite image is called a \textbf{property F} object, and $\mB$ has property F if all objects are property F objects. It is conjectured (see \cite{NR}) that $\dim(X)^2\in \ZZ$ if and only if $X$ has property F, so that $\mB$ has property F if and only if $\mB$ is weakly integral.

Suppose that every object in a modular $\mC$ has property F.  Then the same is true of $\mC^\sigma$, since the relations defining a finite group are polynomials.  Moreover, the braid group image only depends on the underlying braided fusion category $\mB$, i.e. is independent of the spherical structure.  Thus if a weakly integral modular category $\mC$ has property F then for any Galois conjugation $\sigma$ the underlying braided fusion category, $\mB^\sigma$ equipped with the positive spherical structure $\mB^\sigma_+$ also has property F.

Recently it was shown \cite{RW} that the integral modular categories $SO(N)_2$ obtained from quantum groups $U_q so_N$ at $q=e^{\pi i/N}$ ($N$ even) and $q=e^{\pi i/(2N)}$ ($N$ odd) have property F.  The proof involves a detailed analysis of representations of these quantum groups, rather than categorical-level arguments.  In particular the proof does not immediately imply that unitary modular categories with the same fusion rules as $SO(N)_2$ (i.e. \emph{metaplectic modular categories} also have property F).  On the other hand, metaplectic modular categories have now been classified and enumerated.  This suggests that we can infer property F for those metaplectic modular categories with underlying braided fusion categories Galois conjugate to $SO(N)_2$.

\subsection{Boson Condensation and Gauging}
Two processes that we employ in our analysis are gauging and de-gauging (sometimes called anyon condensation), which may be interpreted physically as phase transitions for anyon systems \cite{Burnell}.  First let us introduce the basic construction we call de-gauging (which was first described in \cite{Pareigis95} and subsequently rediscovered and developed in \cite{MuegerJAlg03,Brugieres,DMNO1} under various conditions and under different names).  Let $\mcC$ be modular and $\Rep(G)\cong \mcD\subset\mcC$ a Tannakian subcategory (here a Tannakian category is a symmetric braided fusion category equivalent to $\Rep(G)$ for some finite group $G$).  The $G$-de-equivariantization $\mcC_G$ of $\mcC$ is a faithfully $G$-graded category (in fact, a braided $G$-crossed category) with modular trivial component $[\mcC_G]_e$ of dimension $\dim(\mcC)/|G|^2$  and $[\mcC_G]_e$ is the \textbf{$G$-de-gauging} of $\mcC$ \cite{DMNO1}.  One does not need to understand the full $G$-de-equivariantization of $\mC$ to obtain $[\mC_G]_e$: in fact $[\mC_G]_e=(\mcD^\prime)_G$, where 
$$\mcD^\prime=\{Y\in\mC: c_{X,Y}c_{Y,X}=\id_{Y\otimes X}\;\text{for all}\; X\in\mcD\}$$ is the M\"uger centralizer of $\mcD\subset\mC$ \cite{DMNO1}.

The simplest case of de-gauging is \textbf{boson condensation}.  Whenever a modular category $\mathcal{C}$ contains a \textbf{boson} $b$,  i.e. a self-dual invertible object with twist $\theta_b=1$, then the fusion subcategory $\langle b \rangle$ is equivalent to  $\Rep(\ZZ_2)$.   In this case, the de-equivariantization functor $F: \mcC \to \mcC_G$ is easier to understand.   In particular, if $X \in \mcC$ is a simple object and $b \otimes X \not\cong X$, then $F(X) \cong X^{(1)} \oplus X^{(2)}$ for simple objects $X^{(1)}, X^{(2)}$.  On the other hand, if $b \otimes X \cong X$, then $F(X)$ is a simple object.  There is a trichotomy among self-dual invertible objects in a ribbon category: they are either bosons as above, \emph{semions} $s$ with $\theta_s=\pm i$ in which case the subcategory $\langle s\rangle$ is modular or \emph{fermions} $f$ with $\theta_f=-1$ and $\langle f\rangle \cong \sVec$.

The reverse process, $G$-gauging, is more complicated \cite{BBCW,CGPW}.  Here one starts with a modular category $\mcB$ and an action of a finite group $G$ by braided tensor autoequivalences: $\rho:G\rightarrow \Aut_{\ot}^{br}(\mcB)$.  A  \textbf{$G$-gauging} of $\mcB$, when it exists, is a new modular category obtained by first constructing a $G$-graded fusion category $\mcD$ with trivial component $\mcD_e=\mcB$ and then equivariantizing to obtain a new modular category $\mcD^G$.  There are obstructions to the existence of a gauging, and when the obstructions vanish there can be many $G$-gaugings (see \cite{CGPW}).  A recent result of Natale \cite{Natale} implies that any weakly \textit{group-theoretical} modular category is a $G$-gauging of either a pointed modular category or a Deligne product of a pointed modular category and an Ising category.  In \cite[Question 2]{ENO2} they ask if every weakly integral modular category is weakly group-theoretical (the converse is known to be true).  If the answer is ``yes'' (as many suspect) then to prove one direction of the property $F$ conjecture it would be enough to prove that $G$-gauging preserves property $F$.

\section{Metaplectic Categories}\label{Metaplectic}
We begin with the following definition:

\begin{definition}
A metaplectic modular category is a unitary modular category with the same fusion rules as $SO(N)_2$ for some $N>1$.
\end{definition}
The structure and properties of $SO(N)_2$ were studied in some detail in \cite{NR}, from which much of the results we outline are taken. 
The fusion rules for $SO(N)_2$ (and hence $N$-metaplectic modular categories) naturally split into three cases, depending on the value of $N$ mod $4$.  

\subsection{Fusion rules for odd $N$.}

The $N$-metaplectic modular categories for odd $N>1$ have $2$ simple objects $X_1, X_2$ of dimension $\sqrt{N}$, two simple objects $\one, Z$ of dimension $1$, and $\frac{N-1}{2}$ objects $Y_i$, $1\leq i\leq\frac{N-1}{2}$ of dimension $2$.  The fusion rules are \cite{acrw}:
\begin{enumerate}
 \item $Z\otimes Y_i\cong Y_i$, $Z\otimes X_i\cong X_{i+1}$ (modulo $2$), $Z^{\otimes 2}\cong\one$,
 \item $X_i^{\otimes 2}\cong \one\oplus \bigoplus_{i} Y_i$,
 \item $X_1\otimes X_2\cong Z\oplus\bigoplus_{i} Y_i$,
 \item $Y_i\otimes Y_j\cong Y_{\min\{i+j,N-i-j\}}\oplus Y_{|i-j|}$, for $i\neq j$ and $Y_i^{\otimes 2}=\one\oplus Z\oplus Y_{\min\{2i,N-2i\}}$.
\end{enumerate}
It is shown in \cite{acrw} that $Z$ is always a boson, and 
 $N$-metaplectic modular categories with $N$ odd were classified and enumerated by condensing $Z$: there are precisely $2^{s+1}$ inequivalent such categories, where $s$ is the number of distinct primes dividing $N$. The fusion rules for the (adjoint) subcategory generated by $Y_1$ with simple objects $\one, Z$ and all $Y_i$ are precisely those of the dihedral group $D_N$ of order $2N$, and, moreover this subcategory coincides the centeralizer of the Tannakian $\langle Z\rangle\cong\Rep(\Z_2)$.

\subsection{Fusion rules for $N \equiv 2 \pmod{4}$.}\label{2mod4}
  The $N$ metaplectic modular categories for $N\equiv 2\pmod{4}$ have
  rank $k+7$, where $k = N/2$ (an odd number). We will denote by $SO(2)_2$ the pointed modular category $\mcC(\Z_8,Q)$ with twists $e^{j^2\pi i/16}$ for uniformity of notation so that there are $4$ inequivalent $2$-metaplectic modular categories (since there are $4$ inequivalent non-degenerate symmetric quadratic forms on $\Z_8$ see \cite{Wall}).  Generally, \cite{BPR} there are exactly $2^{s+1}$ inequivalent $N$-metaplectic modular categories in this case, where $s$ is the number of prime divisors of $N$.  The group of isomorphism classes of invertible objects for $N\geq 6$
  is isomorphic to $\ZZ_4$.     Let $g$ be a generator of this
  group, so the (isomorphism classes of) invertible objects are $g^{j}$ for $0\leq j\leq 3$.  There
  are $k-1$ self-dual simple objects, $X_i$ and $Y_{i}$ for $1\leq i\leq \frac{k-1}{2}$, of dimension $2$.  The remaining four
  simples objects, $V_{i}$ for $1\leq i\leq 4$, have dimension $\sqrt{k}$.  The following fusion rules hold \cite{BPR}:
  \begin{itemize}
    \item $g\otimes X_a\cong Y_{\frac{k+1}{2}-a}$, and $g^2\otimes X_a\cong
    X_a,$ and $g^{2}\otimes Y_{a}\cong Y_{a}$ for  $1\leq a\leq
    (k-1)/2$.
    \item $X_a\otimes X_a \cong \one \oplus g^2\oplus X_{\mathrm{min}\{2a, k-2a\}}$; $X_{a}\otimes X_{b} \cong X_{\mathrm{min}\{a+b,k-a-b\}}\oplus X_{|a-b|}$  ($a\neq b$)
    
    \item $V_1\otimes V_1 \cong g\oplus
    \bigoplus\limits_{a= 1}^{\frac{k-1}{2}}Y_{a}$.
    \item $gV_{1}=V_{3}$, $gV_{3} \cong V_{4}$, $gV_{2}\cong V_{1}$, $gV_{4} \cong V_{2}$ and $g^{3}V_{a} \cong V_{a}^{*}$, $V_{2} \cong V_{1}^{*}$, $V_{4} \cong V_{3}^{*}$
  \end{itemize} Again adopting the same notion for simple objects in a general $N$-metaplectic category $\mC$ with $N\equiv 2\pmod{4}$ one finds that $g^2$ is always a boson and the classification of $N$-metaplectic modular categories with $N\equiv 2\pmod{4}$ was obtained in \cite{BPR} by condensing $\langle g^2\rangle$, to obtain a pointed cyclic modular category.  Indeed, the centralizer of $\langle g^2\rangle\cong\Rep(\Z_2)$ has simple objects $X_i,Y_i$ and the $g^j$ i.e all simple objects of dimension $1$ or $2$.  The simple object $Y_1$ generates this subcategory, which has the same fusion rules as $\Rep(\Z_4\ltimes \Z_{k})$ (with the generator of $\Z_4$ acting by inversion on $\Z_k$) see \cite[Remark 4.4 and Theorem 4.8]{NR}.  In this notation the $\Z_4$-grading on $\mC$ has trivial component $\mC_0$ with simple objects $\one,g^2,X_1,\ldots,X_{\frac{k-1}{2}}$, component $\mC_2$ with simple objects $g,g^3,Y_1,\ldots,Y_{\frac{k-1}{2}}$ and the other two components with simple objects $\{V_1,V_3\}$ and $\{V_2,V_4\}$ respectively.  Obviously there are labeling ambiguities associated with $g\leftrightarrow g^3$ and $\{V_1,V_3\}\leftrightarrow\{V_2,V_4\}$.

\subsection{Fusion rules for $N \equiv 0 \pmod{4}$.}\label{4 divides N}
The $N$-metaplectic modular categories with $N\equiv 0\pmod{4}$  with $2k=N$ have
  rank $k+7$ and dimension $4N$ \cite{NR}.  The simple objects have dimension $1,2$ and $\sqrt{k}$ and are all self-dual.
Setting  $r = \frac{k}{2} - 1$, the ($2r+1=k-1$) simple objects $X_i$ for $0\leq i\leq r-1$ and $Y_j$ for $0\leq j\leq r$ have dimension $2$ and the simple objects $V_i,W_i$ have dimension $\sqrt{k}$.  
 For $k>2$ the key fusion rules are as follows \cite{BGPR}:
   
  \begin{itemize}
  \item $h^{\ot 2} \cong g^{\ot 2} \cong \one$, $h\ot X_i \cong g\ot X_i \cong X_{r-i-1}$ and $h\ot Y_i \cong g\ot Y_i \cong Y_{r-i}$
  \item $g\ot V_1 \cong V_2, h\ot V_1 \cong V_1$ and $h\ot W_1 \cong W_2, g\ot W_1 \cong W_1$
  \item $V_1^{\ot 2} \cong \one\oplus h\oplus\bigoplus_{i=0}^{r-1} X_i$
  \item $W_1^{\ot 2} \cong \one\oplus g\oplus\bigoplus_{i=0}^{r-1} X_i$
  \item $W_1\ot V_1 \cong \bigoplus_{i=0}^r Y_i$
  \item $X_i\ot X_j \cong \begin{cases} X_{i+j+1}\oplus X_{j-i-1} & i<j\leq\frac{r-1}{2}\\ \one\oplus hg\oplus X_{2i+1} & i=j<\frac{r-1}{2}\\ \one \oplus h\oplus g\oplus hg & i=j=\frac{r-1}{2}<r-1\end{cases}$
  \item $Y_i\ot Y_j \cong \begin{cases} X_{i+j}\oplus X_{j-i-1} & i<j\leq\frac{r}{2}\\ \one\oplus hg\oplus X_{2i} & i=j\leq\frac{r-1}{2}\\ \one\oplus h\oplus g\oplus hg &i=j=\frac{r}{2}.\end{cases}$
 
  \end{itemize}

Notice that all other fusion rules may be derived from the above by tensoring with $h$ or $g$ as needed.  For example $V_1\ot V_2 \cong g\ot V_1^{\ot 2} \cong h\oplus hg\oplus\bigoplus_{i=0}^{r-1}X_i$.  The $\Z_2\times\Z_2$ grading is clear from these rules, we denote the trivial component by $\mC_{(0,0)}$ and the component with simple objects $Y_j$ by $\mC_{(1,1)}$.  The classification of $N$-metaplectic modular categories with $4\mid N$ was obtained in \cite{BGPR} by condensing $hg$, which is always a boson.   It is shown in \cite{BGPR} that, for $N\geq 8$ there are $3\cdot 2^{s+1}$ inequivalent $N$-metaplectic modular categories where $s$ is the number of distinct primes dividing $N$.  The degenerate case $N=4$ is special: it has fusion rules like $\Ising^\nu\boxtimes\Ising^\mu$ for which there are $20$ inequivalent metaplectic modular categories, rather than $12$.

The centralizer of the pointed subcategory $\langle h,g\rangle$ is always the trivial component $\mC_{(0,0)}$ with simple objects $\one,h,g,hg$, and all $X_i$, whereas $\langle hg\rangle^\prime$ also includes the component $\mC_{(1,1)}$ with simple objects $Y_j$ and the component with simple objects $V_i$ by $\mC_{(1,0)}$ for concreteness. There is a slight further subtlety related to the value of $N\equiv 0,4\pmod{8}$. The objects $h,g$ are bosons precisely when $8\mid N$, and are fermions otherwise.  Moreover, when $8\mid N$ one sees that $h$ centralizes the trivial component as well as the component $\mC_{(1,0)}$ containing $V_1$ and $V_2$, while $g$ centralizes the $W_i$.  When $8\nmid N$ the opposite is true: $g$ centralizes the $V_i$ and $h$ centralizes the $W_i$ \cite{BGPR}. In \cite{NR} it is shown that the fusion subcategory generated by $Y_0$ (i.e. $\langle hg\rangle^\prime=\mC_{(0,0)}\oplus\mC_{(1,1)}$) has the same fusion rules as the representation category $\Rep(D_{N})$ of the dihedral group of order $N$.

\section{Property F for $N$-Metaplectic Categories with $N$ odd}





\begin{theorem} If $\mC$ is an  $N$-metaplectic modular category with $N := 2r+1$ odd, then $\mC$ has property F.
\end{theorem}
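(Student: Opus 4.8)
The plan is to leverage the classification and enumeration result from \cite{acrw}: for $N$ odd with $s$ distinct prime factors there are exactly $2^{s+1}$ inequivalent $N$-metaplectic modular categories, together with the theorem from \cite{RW} that the quantum-group category $SO(N)_2$ has property $F$. Since property $F$ of the braid group representations depends only on the underlying braided fusion category (independent of spherical structure), and is preserved under Galois conjugation (because the defining relations of a finite group are polynomial, as noted in the Preliminaries), it suffices to show that every $N$-metaplectic modular category has underlying braided fusion category obtained from that of $SO(N)_2$ by a Galois conjugation followed by (possibly) a change of spherical structure, i.e. by twisting by an invertible self-dual object.

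First I would set up a parametrization of the $2^{s+1}$ categories. By \cite{acrw}, these arise by condensing the boson $Z$ to get a pointed-plus-dihedral-type modular data, and the $2^{s+1}$ count is governed by choices of a Galois-type parameter (a choice of primitive root data modulo each prime power dividing $N$, giving $2^s$) times a $\ZZ_2$ coming from the two spherical/twist choices distinguishing, e.g., a category from the one obtained by tensoring with a sign on the $\sqrt{N}$-dimensional objects. Concretely: the Galois group $\mathrm{Gal}(\QQ(\zeta_{4N})/\QQ)$ acts on $SO(N)_2$, and I would show this action has exactly $2^s$ orbits on the set of metaplectic braided fusion categories (using that a metaplectic category is determined by its modular data up to the residual $\ZZ_2$), while the factor of $2$ is accounted for by the two inequivalent spherical structures on each such braided fusion category — equivalently the invertible self-dual object $Z$ giving a nontrivial twist. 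Counting: $2^s \cdot 2 = 2^{s+1}$, matching \cite{acrw}, so the map ``(Galois conjugate of $SO(N)_2$, choice of spherical structure)'' $\to$ ``$N$-metaplectic category'' is surjective.

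The main steps, in order: (1) recall from \cite{acrw} the explicit modular data of all $2^{s+1}$ metaplectic categories and of $SO(N)_2$; (2) identify which Galois automorphism $\sigma \in \mathrm{Gal}(\overline{\QQ}/\QQ)$ carries the $SO(N)_2$ $S$- and $T$-matrices to each prescribed one, using that Galois conjugation acts on the $T$-matrix entries $\theta_i$ (roots of unity in $\QQ(\zeta_{4N})$) and compatibly on the $S$-matrix (whose entries lie in a cyclotomic field by \cite{eno}); (3) for the residual $\ZZ_2$, invoke the fact that distinct spherical structures on the braided fusion category underlying $SO(N)_2^\sigma$ correspond to invertible self-dual objects and produce exactly the twin category, so $\mB^\sigma_+$ and its ``sign-twisted'' partner exhaust the two; (4) conclude each metaplectic $\mB$ equals some $SO(N)_2^\sigma$ as a braided fusion category (with positive spherical structure $\mB^\sigma_+$ when $\mB$ is the given unitary one, the other spherical structure otherwise); (5) since $SO(N)_2$ has property $F$ by \cite{RW}, and property $F$ passes to Galois conjugates and is spherical-structure-independent, every $N$-metaplectic $\mB_n$-representation has finite image.

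The step I expect to be the main obstacle is (2)--(3): pinning down that the Galois action on $SO(N)_2$ together with the two spherical structures accounts for \emph{all} $2^{s+1}$ categories and not fewer — i.e. that no metaplectic braided fusion category is ``new,'' outside the Galois orbit of $SO(N)_2$. This requires matching the enumeration parameter in \cite{acrw} (which is phrased in terms of the condensed pointed/dihedral data and a choice of quadratic form or Gauss-sum sign at each prime) against the orbit structure of the cyclotomic Galois group on the modular data of $SO(N)_2$, and checking the bookkeeping of which object-relabelings are genuine equivalences. Once that numerical match is in hand, the surjectivity is forced by counting, and the rest is formal. A secondary technical point is verifying that the two spherical structures on each $\mB^\sigma$ are themselves related by twisting in a way consistent with the metaplectic fusion rules (so that both resulting modular categories are indeed metaplectic), which follows from the trichotomy (boson/semion/fermion) applied to $Z$ and the structure of $\langle Z\rangle\cong\Rep(\ZZ_2)$.
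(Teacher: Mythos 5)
Your overall strategy --- reduce to $SO(N)_2$, which has property $F$ by \cite{RW}, via operations that preserve property $F$, and use the enumeration $2^{s+1}$ from \cite{acrw} to force surjectivity by counting --- is the same as the paper's. But there is a genuine gap in how you produce the last factor of $2$. You propose that after Galois conjugation the residual $\ZZ_2$ is accounted for by the two spherical structures on each underlying braided fusion category. This cannot work, for two reasons. First, changing the spherical structure does not change the underlying braided fusion category at all; in particular it leaves the braiding eigenvalues of $R_{W,W}$ on the $\sqrt{N}$-dimensional objects untouched, and these eigenvalues genuinely distinguish metaplectic categories that are not Galois conjugate. Second, the classification in \cite{acrw} counts \emph{unitary} modular categories, and each braided fusion category admits a unique spherical structure with all dimensions positive (the unitary one, by \cite[Prop.\ 8.23]{eno}); the other spherical structure gives a non-unitary category and so does not contribute a second metaplectic category to the count. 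Hence your bookkeeping $2^s\cdot 2$ double-counts on one side and undercounts the Galois orbit on the other: in fact the size of the Galois orbit itself depends on $r \bmod 8$ (it is $2^{s+1}$ when $\gcd(r,8)=1$, but only $2^s$ when $r$ is even), so a uniform ``$2^s$ orbits times $2$ spherical structures'' does not match the actual orbit structure.

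The missing idea is the paper's second operation: twisting in the sense of \cite{fermion16}, i.e.\ modifying the associator by the nontrivial $3$-cocycle $w\in Z^3(\ZZ_2,U(1))$ on the grading group and correspondingly rescaling the braiding by $\epsilon_{\sigma,\tau}=i$ on the nontrivially graded component. This produces a genuinely different braided fusion category (the relevant braiding eigenvalues are multiplied by $i$), which is what is needed to reach the metaplectic categories outside the Galois orbit when $r$ is even. One must then separately check that this operation preserves property $F$; the paper does so by observing that the twisted braid group representation matrices differ from the untwisted ones only by scalar factors $i^n$. Your proposal has no substitute for either of these steps, so as written the argument covers only the categories in the Galois orbit of $SO(N)_2$ and cannot close the count for $r\equiv 0,2,4,6 \pmod 8$.
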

\begin{proof}

Let $N = p_1^{a_1} \cdots p_s^{a_s}$ be the prime factorization of $N$. From \cite{acrw}, we know that there are precisely $2^{s+1}$ $N$-metaplectic modular categories.  We will show that Galois conjugation and twisting \cite{fermion16} produce all of these categories.

A Galois conjugate of the quantum group category $SO(N)_2$ is not necessarily unitary.  However, it is pseudounitary, so there exists a choice of spherical structure on its underlying braided fusion category to make it unitary.  This choice does not affect the braiding eigenvalues of the category.  

Let $\zeta = e^{\frac{2 \pi i}{16r + 8}}$.  There exists a simple object $W \in SO(N)_2$ of dimension $\sqrt{N}$ such that the eigenvalues of the braiding $R_{W,W}$ are $\zeta^{n_j}$ for $n_j =  (4r + 2)((r - j)(r - j + 1) - j) + (2r + 1)r + 2j^2, $ and $0 \le j \le r$ \cite{hnw}.  The non-isomorphic simple object $W'$ of dimesion $\sqrt{N}$ has braiding eigenvalues $-\zeta^{n_j}$ for $0 \le j \le r$.

The Galois group $\Gal(\QQ(\zeta)/\QQ) \cong (\ZZ/8N\ZZ)^\times$ acts on the set of eigenvalue exponents $\{n_j : 0 \le j \le r\} \subset \ZZ/8N\ZZ$ by left translation.  By the Chinese Remainder theorem, the Galois group acts on each factor of $\ZZ/8N\ZZ \cong \ZZ/p_1^{a_1}\ZZ \times \cdots \times \ZZ/p_s^{a_s}\ZZ \times \ZZ/8\ZZ$ independently.  

We first observe that $n_j = 2j^2 \pmod{N}$.  Since $2(-j)^2 = 2j^2$, we have $\{n_j \pmod{N} : 0 \le j \le r\} = \{2j^2 : j \in \ZZ/N\ZZ \}$  as sets.  Hence, for any $i$, we have $X := \{n_j \pmod{p_i^{a_i}} : 0 \le j \le r\} = \{2j^2 : j \in \ZZ/p_i^{a_i}\ZZ \}$.  The factor of the Galois group acting on $\ZZ/p_i^{a_i}\ZZ$ is $(\ZZ/p_i^{a_i})^\times$.  Since $(\ZZ/p_i^{a_i}\ZZ)^\times$ is cyclic, the stabilizer subgroup $\Stab_{(\ZZ/p_i^{a_i}\ZZ)^\times}(X) = \{x^2: x \in (\ZZ/p_i^{a_i}\ZZ)^\times\}$ has index $2$.    Thus, we get two distinct sets of eigenvalues mod $p_i^{a_i}$ for each $i$.  





Moreover, we have $n_j = r \pmod{8}$ for all $j$.  If $r$ is relatively prime to $8$, this gives 4 choices of Galois conjugates for $[n_j]_8$.  If $r = 2$ or $r = 6$ mod 8, we have 2 choices.  If $r = 0$ or $r = 4$ mod 8, there is only one choice.     In all but the last ($r = 0$ or $r = 4$) case, we must divide by $2$ to account for labelling ambiguity on the nonintegral objects.  Thus, when $r$ is relatively prime to $8$, we get $(2^s)(4)/2 = 2^{s+1}$ distinct categories from Galois conjugation.  When $r = 2$ or $r = 6$ mod 8, we get $(2)(2)/2 = 2^s$ distinct categories.  When $r = 0$ or $r = 4$ mod 8, we get $(2)(1) = 2^s$ modular categories.

\newcommand{\wh}{\widehat}

To construct the remaining metaplectic modular categories, we will use twisting in the sense of Bruillard et al. \cite{fermion16}. Let $\mD$ be a modular category.  Let $B \subset G(\mD)$ be a subgroup of the group of the invertibles of $\mD$, and let $w \in Z^3(\wh{B}, U(1))$ be a 3-cocycle.  The twisted category $\mD_{(1,w)}$ is a $\wh{B}$-graded category with the same objects and tensor product as $\mD$, but with an associator twisted by $w$.  More explicitly, if $\sigma, \tau, \rho \in \widehat{B}$, then we have
$$\wh \alpha_{X_\sigma, X_\tau, X_\rho} = w_{\sigma, \tau, \rho} \alpha_{X_\sigma, X_\tau, X_\rho},$$
where $\wh \alpha $ and $\alpha$ are the associators of $\mD_{(1,w)}$ and $\mD$, respectively.

Let $B \subset G(B)$ be a subgroup such that the induced map $U(G) \to \wh{G(B)} \to \wh B \cong \ZZ_2$ corresponds to the GN-grading. Let $w \in Z^3(\ZZ_2, U(1))$ be the normalized 3-cocycle  given by $w(1,1,1) = -1$.  Let $\alpha$ and $c$ denote the associator and braiding for some metaplectic modular category $\mD$, and let $\widehat\alpha$ and $\widehat c$ denote the associator and braiding of the twisted category  $\mD_{(1,w)}$, respectively. 

We claim that a solution to the hexagon equations is given by 
$$\widehat{c}_{X_\sigma, X_\tau} = {\epsilon_{\sigma, \tau}} c_{X_\sigma, X_\tau},$$
where $\epsilon_{\sigma,\tau} = i$ if $\sigma = \tau = 1$, and $\epsilon_{\sigma,\tau} = 1$ otherwise.  Indeed,  in diagrammatic composition order, we have
\begin{align*}
& \widehat\alpha_{X_\sigma, X_\tau, X_\rho} \circ \widehat c_{X_\sigma,X_\tau \otimes X_\rho} \circ \widehat \alpha_{X_\tau,X_\rho,X_\sigma} \\
& \qquad = (w_{\sigma,\tau,\rho}\alpha_{X_\sigma,X_\tau,X_\rho}) \circ  \epsilon_{\sigma,\tau\rho} c_{X_\sigma,X_\tau \otimes X_\rho} \circ (w_{\tau,\rho,\sigma} \alpha_{X_\tau,X_\rho,X_\sigma}) \\
& \qquad = \epsilon_{\sigma,\tau\rho} \alpha_{X_\sigma,X_\tau,X_\rho} \circ  c_{X_\sigma,X_\tau \otimes X_\rho} \circ \alpha_{X_\tau,X_\rho,X_\sigma} \\
& \qquad = \epsilon_{\sigma,\tau\rho} \cdot (c_{X_\sigma,X_\tau} \otimes \id_{X_\rho}) \circ \alpha_{X_\tau,X_\sigma,X_\rho} \circ (\id_{X_\tau} \otimes c_{X_\sigma,X_\rho}) \\
& \qquad = \epsilon_{\sigma,\tau\rho} \epsilon_{\sigma,\tau}^{-1} \epsilon_{\sigma,\rho}^{-1} w_{\tau,\sigma,\rho} \cdot (\widehat c_{X_\sigma,X_\tau} \otimes \id_{X_\rho}) \circ (\widehat \alpha_{X_\tau,X_\sigma,X_\rho}) \circ (\id_{X_\tau} \otimes \widehat c_{X_\sigma,X_\rho}) \\
& \qquad =  (\widehat c_{X_\sigma,X_\tau} \otimes \id_{X_\rho}) \circ (\widehat \alpha_{X_\tau,X_\sigma,X_\rho}) \circ (\id_{X_\tau} \otimes \widehat c_{X_\sigma,X_\rho}), 
\end{align*}
where the last equality follows from case analysis. The verification for the other hexagon equation is analogous.

The spherical structure on the twisted category $\mD_{(1,w)}$ is the same as the spherical structure on $\mD$.  Since $\epsilon$ and $w$ are $U(1)$-valued, the modular category $\mD_{(1,w)}$ is also unitary.

Since any matrix in the twisted braid group representation differs from a matrix in the untwisted representation by a factor of the form $i^n$, this twisting preserves Property F.  By examining the exponents of the braiding eigenvalues mod $8$, we find that twisting accounts for another factor of $2$ in our count when $r$ is even, covering the remaining modular categories.  
\end{proof}

We illustrate the proof of the theorem with the following tables of braiding eigenvalues for $3$- and $5$-metaplectic categories.

\textbf{ $3$-metaplectic categories.}
The following table gives the exponents of the relevant braiding eigenvalues of the Galois conjugates of $SO(3)_2$.  More explicitly, given $\sigma \in  (\ZZ/24\ZZ)^\times \cong \Gal(\QQ(\zeta)/\QQ)$ and $n \in \ZZ/24\ZZ$, we have the group action $\sigma(n) = \sigma \cdot n$.  Letting $\zeta = e^{\frac{\pi i}{12}}$,  the braiding eigenvalues of the $\sigma$-Galois conjugate of the first nonintegral object are $\sigma(R_{V_1, V_1}^i) = \zeta^{\sigma(n_i)}$.  The braiding values of the other nonintegral object are given by $\sigma(R_{V_2, V_2}^i) = -\sigma(R_{V_1, V_1}^i) = \zeta^{\sigma(12 + n_i)}$.
Since $n_0 = 9$ and $n_1 = 1$, we have the following table of exponents of braiding eigenvalues of Galois conjugates.
$$
    \begin{tabular}{L|LLLL}
    \sigma  & \sigma(n_0)  & \sigma(n_1)  & \sigma(12 + n_0) &  \sigma(12 + n_1) \\
    \hline\vrule height 12pt width 0pt
    1       & 9          &  1      &  21   &  13  \\
    5       & {21}       &  5     &   9    &  17 \\
    7       & {15}       &  7     &   3   & 19  \\
    11       & {3}       &  11    &   15   &  23   \\
    \end{tabular} 
$$

Since we know there are precisely four $3$-metaplectic categories, this table illustrates the fact that all four $3$-metaplectic categories lie in the same orbit under the Galois conjugation action, since they are distinguished by these eigenvalues.

\textbf{$5$-Metaplectic categories.}
Here $\zeta = e^{\frac{\pi i}{20}}$.  Similarly, we have the following table of exponents of braiding eigenvalues.

$$
    \begin{tabular}{L|LLLLLL}
    \sigma & \sigma(n_0)  & \sigma(n_1)  & \sigma(n_2) & \sigma(20 + n_0) &  \sigma(20 + n_1)  & \sigma(20 + n_2)\\
    \hline\vrule height 12pt width 0pt
    1       & {10} & {18} &  2 & 30 & 38 & 22 \\
    3       & {30} & {14} &  6 & 10 & 34 & 26 \\
    7       & {30} & {6} &  {14} & 10 & 26 & 34 \\
    9       & {10} & {2} &  {18} & 30 & 22 & 38 \\
    11       & {30} & {38} &  {22} & 10 & 18 & 2\\
    13       & {10} & {34} &  {26} & 30 & 14 & 6 \\
    17       & {10} & {26} &  {34} & 30 & 6 & 14\\
    19       & {30} & {22} &  {38} & 10 & 2 & 18\\
    \end{tabular} 
$$

Since $r = 2$, we only have two distinct sets of braiding eigenvalues in the table, so that Galois conjugation only provides two of the four $5$-metaplectic categories.  The other two categories are obtained by twisting: at the level of eigenvalues this is manifested by twisting by $i$, i.e. adding $10$ to each exponent in a row of the table.

\section{A Sequence of Gaugings}
$N$-metaplectic modular categories with $4\mid N$ have $4$ self-dual invertible objects, are are therefore $\Z_2\times\Z_2$-graded.  The $(0,0)$-graded component is the adjoint subcategory, and without loss of generality we may assume that the $(1,1)$-graded component contains all of the remaining $2$-dimensional simple objects.  The $(1,0)$- and $(0,1)$-graded  components each contain two isomorphism classes of $\sqrt{N/2}$-dimensional simple objects.

When $8\mid N$ the $N$-metaplectic modular categories of  have 3 bosons $hg,h,g$, i.e. invertible, self-centralizing objects with trivial twists.  The centralizer of each of these bosons consists of the $(0,0)$-graded (adjoint) component and one of the three other components.  It was shown in \cite{BGPR} that condensing the boson  $bhg$ that centralizes the (integral) $(1,1)$-graded component yields a cyclic modular category of the form $\mC(\Z_N,q)$, for some non-degenerate symmetric quadratic form $q$ on $\Z_N$.  
Except for the degenerate case $N=8$, the bosons $h,g$ are uniquely determined (up to the labeling ambiguity $h\leftrightarrow g$) by the condition that they centralize a simple object of dimension $\sqrt{\frac{N}{2}}$.  For $N=8$ all non-invertible simple objects have dimension $2$, and the labels of all 3 bosons are ambiguous, i.e. one cannot distinguish them by any of their properties.  The follow shows that condensing either of the two bosons $h,g$ yields another metaplectic modular category.
\begin{theorem}
Let $\mC$ be an $N$-metaplectic modular category with $8 \mid N$, and let $\mD$ be the unitary modular category given by condensing the boson $h \in \mC$ (or $g$) in the notation of subsection \ref{4 divides N}.  Then $\mD$ is an $\frac{N}{4}$-metaplectic modular category.
\end{theorem}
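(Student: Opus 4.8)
The plan is to read off the fusion rules of $\mD$ from those of $\mC$ through the boson-condensation functor $F$, match them with the $SO(N/4)_2$ rules recorded in Sections~\ref{2mod4} and~\ref{4 divides N}, and take unitarity and modularity of $\mD$ as given. Write $N=8\ell$, so $k:=N/2=4\ell$ and $r:=\tfrac{k}{2}-1=2\ell-1$ is odd; since $N/4=2\ell$, the target is a $2\ell$-metaplectic category, of the type of Section~\ref{2mod4} when $\ell$ is odd (so $N/4\equiv 2\bmod 4$) and of the type of Section~\ref{4 divides N} when $\ell$ is even. Because $8\mid N$, $h$ is a boson and $\langle h\rangle\cong\Rep(\Z_2)$ is Tannakian, so $\mD=[\mC_{\Z_2}]_e=(\langle h\rangle')_{\Z_2}$ (see \cite{DMNO1}) has $\dim(\mD)=\dim(\mC)/4=N=4\cdot\tfrac{N}{4}$, already matching the dimension of an $\tfrac{N}{4}$-metaplectic category.

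I would first fix the input data. By Section~\ref{4 divides N} together with the dimension count $\dim\langle h\rangle'=\dim(\mC)/2=2N$, we have $\langle h\rangle'=\mC_{(0,0)}\oplus\mC_{(1,0)}$, with simple objects $\one,h,g,hg$ of dimension $1$, the $X_i$ for $0\le i\le r-1$ of dimension $2$, and $V_1,V_2$ of dimension $\sqrt{k}$; condensing $g$ instead is identical after swapping $\mC_{(1,0)}$ with $\mC_{(0,1)}$ and the $V_i$ with the $W_i$. Applying $F$ to these simples, and using $h\ot X_i\cong X_{r-i-1}$ and $h\ot V_1\cong V_1$ (hence also $h\ot V_2\cong V_2$ since $V_2\cong g\ot V_1$ and fusion is commutative): the identifications $\one\sim h$ and $g\sim hg$ yield invertibles $\one_\mD:=F(\one)$ and $\bar g:=F(g)$ with $\bar g^{\ot 2}\cong\one_\mD$; the $h$-fixed object $X_{(r-1)/2}$ (present because $r$ is odd) splits as $F(X_{(r-1)/2})\cong a\oplus a'$, giving two more invertibles; the $\tfrac{r-1}{2}$ remaining pairs $\{X_i,X_{r-1-i}\}$ give $\tfrac{r-1}{2}$ simples of dimension $2$; and $V_1,V_2$ each split into two simples of dimension $\tfrac{1}{2}\sqrt{k}=\sqrt{N/8}$, producing four objects $u_1,\dots,u_4$, pairwise non-isomorphic since $V_1\not\cong V_2\cong h\ot V_2$. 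Thus $\mD$ has $4+\tfrac{r-1}{2}+4=\ell+7$ simple objects, of dimensions $1$, $2$, $\sqrt{N/8}$ — exactly the rank and dimension spectrum of a $2\ell$-metaplectic category.

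Next I would determine the group of invertibles and the grading. Applying $F$ to $X_{(r-1)/2}^{\ot 2}\cong\one\oplus h\oplus g\oplus hg$ gives $(a\oplus a')^{\ot 2}\cong 2\one_\mD\oplus 2\bar g$, so $\{\one_\mD,\bar g,a,a'\}$ is a group of order $4$; whether it is $\Z_4$ or $\Z_2\times\Z_2$ is settled by the twists of $a,a'$, read off from the $T$-matrix of $\mC$ on $X_{(r-1)/2}$, and I expect $\Z_4$ for $\ell$ odd and $\Z_2\times\Z_2$ for $\ell$ even — the correct group in each case. The $\Z_2$-grading $\mD=\mD_0\oplus\mD_1$ with $\mD_0=F(\mC_{(0,0)})$ (all invertibles and all dimension-$2$ simples) and $\mD_1=F(\mC_{(1,0)})$ (the $u_i$) is the coarsening of the metaplectic grading. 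For the remaining fusion products I would either apply $F$ to the rules of Section~\ref{4 divides N} — computing $F(X_i)\ot F(X_j)$, $F(X_i)\ot F(V_1)$, $F(V_1)^{\ot 2}$, $F(V_1)\ot F(V_2)$ and decomposing — or, using modularity of $\mD$, compute the $S$-matrix of $\mD$ from that of $\mC$ via the condensation formula and apply the Verlinde formula; the outcome should be the $2\ell$-metaplectic fusion rules, the adjoint subcategory of $\mD$ acquiring $\Rep(D_{N/4})$-fusion and each $u_i^{\ot 2}$ being an invertible plus a sum of dimension-$2$ simples.

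The main obstacle is this final matching. Where $F$ splits an object, pushing a fusion rule of $\mC$ forward only constrains a sum such as $(a\oplus a')^{\ot 2}$, $(u_1\oplus u_2)^{\ot 2}$, or $(u_1\oplus u_2)\ot(u_3\oplus u_4)$, not the individual products; resolving this — equivalently, correctly transporting the $S$- and $T$-data of $\mC$ across the split simples, and correctly carrying out the $N\equiv 8$ versus $N\equiv 0\bmod 16$ dichotomy — is the bulk of the work. I would use the faithful metaplectic grading of $\mD$ (bootstrapped from $\mD=\mD_0\oplus\mD_1$ and partial fusion data), a dimension-and-positivity count, and the associativity constraints relating the two summands of $F(V_1)$; failing that, once $\mD$ is known to be a unitary modular category of dimension $4\cdot\tfrac{N}{4}$ with the correct rank, dimension spectrum, group of invertibles and dihedral adjoint subcategory, the classification and enumeration of \cite{acrw,BPR,BGPR} leave only the $SO(N/4)_2$ fusion rules. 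A harmless edge case is $N=8$: there $\sqrt{N/8}=1$, so $u_1,\dots,u_4$ are invertible and $\mD$ is the pointed category $\mC(\Z_8,Q)=SO(2)_2$, consistent with the general count ($\ell=1$, rank $8$).
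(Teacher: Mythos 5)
Your setup coincides with the paper's: both condense $h$, identify $\langle h\rangle'=\mC_{(0,0)}\oplus\mC_{(1,0)}$, track the image of each simple under the de-equivariantization functor $F$ (with $X_{(r-1)/2}$ splitting into two invertibles $g_1,g_2$ and each $V_i$ splitting into two $\sqrt{N/8}$-dimensional simples), and match rank, dimension spectrum and global dimension. The gap is exactly where you say "the main obstacle" lies: you list candidate strategies for pinning down the individual fusion products but execute none of them, and your fallback is not sound. The classification results of \cite{acrw,BPR,BGPR} take the metaplectic fusion rules as a \emph{hypothesis} and classify the categories realizing them; they are not recognition theorems, so knowing that $\mD$ is unitary modular with the right rank, dimensions and invertibles does not by itself force the $SO(N/4)_2$ fusion rules. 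Likewise, your proposal to settle $\Z_4$ versus $\Z_2\times\Z_2$ by reading the twists of $a,a'$ off the $T$-matrix does not obviously work: the two summands of $F(X_{(r-1)/2})$ inherit the same twist, and a twist value alone does not decide whether $a^{\ot 2}\cong\one_{\mD}$ or $a^{\ot 2}\cong\bar g$.

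The paper closes this gap with a genuine recognition theorem, namely \cite[Theorem 4.2 and Remark 4.4]{NR}: once the subcategory generated by the $2$-dimensional simples is shown to satisfy those hypotheses, it must have the fusion rules of $\Rep(D_{N/4})$ (when $16\mid N$) or $\Rep(\Z_4\ltimes\Z_{N/8})$ (when $N\equiv 8\bmod 16$), and the remaining products involving the $V_1^{(j)}$ are then forced by $F(V_1^{\ot 2})$ and the grading. The only hypothesis not already visible from your bookkeeping is the duality of $g_1,g_2$, and the paper settles it by the parity of $\dim\bigl((V_1^{(j)})^{\ot 2}\bigr)=N/8$: when this is odd, $\one_{\mD}$ cannot occur in $(V_1^{(j)})^{\ot 2}$, so the $V_1^{(j)}$ are dual to each other, the $g_i$ sit in a nontrivial graded component, the grading is $\Z_4$, and $g_1\cong g_2^{*}$; when it is even, both $\one_{\mD}$ and $g_j$ occur, the grading is $\Z_2\times\Z_2$, and the $g_i$ are self-dual. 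Supplying this reduction-plus-parity argument (or an equivalently complete derivation of the fusion rules) is what your proposal still needs; your treatment of the degenerate case $N=8$ agrees with the paper's.
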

\begin{proof} For the moment, assume that $N\geq 16$. It is relatively straightforward to verify that $\mD$ has the right rank and dimensions of simple objects.  Set $N=2k$ and $r=\frac{k}{2}-1$. $\mC$ has rank $k+7$, with $k-1=2r+1$ objects of dimension $2$: $X_0,\ldots, X_{r-1}$ and $Y_0,\ldots,Y_{r}$.  
By definition $\mD = (\langle h\rangle')_{\Z_2}$ where $\Rep(\Z_2)=\langle h\rangle$.  From the discussion in Section \ref{prelim} we see that $\langle h\rangle^\prime= \mC_{(0,0)}\oplus\mC_{(1,0)}$ with simple objects $$\{\one, h, hg,g, X_0, X_1, \ldots, X_{r-1}, V_1, V_2 \}.$$  Let $F:\langle h \rangle^\prime \to (\langle h\rangle')_{\Z_2}$ be the de-equivariantization functor.  As $h\otimes V_i\cong V_i$ and $h\otimes X_i\cong X_{r-i-1}$ we have the following, where we set $t=\frac{r-1}{2}=\frac{N}{8}-1$:
\begin{enumerate}
\item
$F(V_i)=V_i^{\pz}\oplus V_i^{\po}$, where $V_i^{(j)}$ are $\sqrt{\frac{N}{8}}$-dimensional objects.
\item $\tY_i:=F(X_{2i})\cong F(X_{r-2i-1})$ are simple objects of dimension 2, for $0\leq i< t/2$ (provided $N\geq 16$, otherwise there are no $\tY_i$)
\item $\tX_j:=F(X_{2j+1})\cong F(X_{r-2j-2})$ are simple objects of dimension 2, for $0\leq j <t/2$ (provided $N\geq 24$, otherwise there are no $\tX_j$)
\item $F(X_{t})=g_1\oplus g_2$ with $g_1,g_2$ invertible,
\item $F(h)=F(\one)=\one_{\mD}$ 
\item $F(hg)=F(g)=Z$ an invertible object.
\end{enumerate}

In particular, the modular category $\mD=F(\langle h\rangle^\prime)$ has the same dimensions ($1$ with multiplicity $4$, $2$ with multiplicity $t=\frac{N}{8}-1$ and $\sqrt{\frac{N}{8}}$ with multiplicity $4$), global dimension ($N$) and rank ($\frac{N}{8}+7$) as an $\frac{N}{4}$-metaplectic modular category. It is important to point out that when $16\mid N$  we have $\frac{N}{4}\equiv 0\pmod {4}$ so that $t$ is odd, while $\frac{N}{4}\equiv 2\pmod{4}$ so that $t$ is even otherwise, so these cases correspond to either  the self-dual fusion rules of subsection \ref{4 divides N} or the non-self-dual fusion rules of subsection \ref{2mod4}.  Here are a few useful observations that can be deduced from the fusion rules of $\mC$:

\begin{itemize}
\item $\mD$ is graded by a group of order $4$, with each component of dimension $\frac{N}{4}$.  
\item If $16\mid N$ (so that $t$ is odd) the trivial component $\mD_0$ contains all $1$-dimensional simple objects and $\frac{t-1}{2}$ simple objects of dimension $2$, otherwise (i.e. $t$ is even) the trivial component contains $\one_{\mD}$ and $Z$ but not $g_1$ or $g_2$.  
\item The object $\tY_0$ generates the subcategory with simple objects $\one_{\mD},Z,g_1,g_2$ and all $\tX_j,\tY_i$.
\item The objects $Z,\tY_i,\tX_j$ are self-dual.
\item The subcategory generated by $\tX_0$ is the adjoint subcategory $\mD_0$.  In particular no $\tY_i$ lie in the adjoint subcategory.
\item The 4 objects $V_i^{(j)}$ appear in two distinct graded components, in pairs.

\end{itemize}
 One may directly show that $\mD$ has the same fusion rules as $SO(\frac{N}{4})_2$ using standard techniques, however this is a somewhat tedious task. We will instead make use of \cite[Theorem 4.2 and Remark 4.4]{NR} and the descriptions in Section \ref{Metaplectic} to derive the result.  The first step is to verify that the fusion rules for the $\frac{N}{2}$-dimensional subcategory $\langle \tY_0\rangle$ with simple objects of dimensions $1$ and $2$ has the fusion rules of either $\Rep(D_{\frac{N}{4}})$ for $t$ odd or $\Rep(\Z_4\ltimes \Z_{\frac{k}{4}})$ for $t$ even.  Then we must verify the fusion rules involving the $V_i^{(j)}$ are also as expected.  We will do these tasks simultaneously.
  
For $t$ odd, the observations above reduce the verification of the hypotheses of Theorem 4.2 of \cite{NR} to showing that the $g_i$ are self-dual, from which we can conclude that $\langle \tY_0\rangle$ has the same fusion rules as $\Rep(D_{\frac{N}{4}})$.  For $t$ even, we must show that $g_1\cong g_2^*$ to verify the hypotheses of Remark 4.4 of \cite{NR} to conclude that $\langle \tY_0\rangle$ has the same fusion rules as $\Rep(\Z_4\ltimes \Z_{\frac{k}{4}})$. We calculate:
 \begin{equation}\label{eq1}
 F(V_1^{\ot 2})=\left(V_1^{\pz}\oplus V_1^{\po}\right)^{\ot 2}\cong g_1\oplus g_2\oplus2(\one_{\mD}\bigoplus_{i=0}^{\frac{t-1}{2}}  \tY_i\oplus\bigoplus_{j=0}^{\frac{t-3}{2}}\tX_j).
\end{equation}
Since 
$$\left(V_1^{\pz}\oplus V_1^{\po}\right)^{\ot 2}\cong {\left(V_1^{(1)}\right)}^{\otimes 2} \oplus {\left(V_1^{(2)}\right)}^{\otimes 2}\oplus 2(V_1^{(1)}\ot V_1^{(2)})$$ it is clear that the $g_1,g_2$ cannot be subobjects of $(V_1^{(1)}\ot V_1^{(2)})$ and $V_1^{(j)}$ for $j=0,1$ are either self-dual or dual to each other.  As we have a labeling choice we may assume $g_j\subset \left(V_1^{(j)}\right)^{\otimes 2}$ for $j=0,1$. 

Now observe that  ${\left(V_1^{(j)}\right)}^{\otimes 2}$ is odd-dimensional when $t$ is even so that in this case $\one_{\mD}$ is not a subobject of ${\left(V_1^{(j)}\right)}^{\otimes 2}$ and hence the $V_1^{(j)}$ are non-self-dual, i.e. are dual to each other for $j=0,1$.  Moreover, the $g_i$ are not in the trivially graded component for $t$ even so that we can conclude that the grading is by $\Z_4$ in this case, so that the group of (isomorphism classes of) invertible objects is isomorphic to $\Z_4$ and hence $g_1\cong g_2^*$.  Thus we can conclude that the fusion rules are the same as those of $\Rep(\Z_4\ltimes \Z_k)$. Since the adjoint subcategory $\mD_0$ contains only simple objects $\one_{\mD},Z$ and all $\tX_j$, the fusion rules involving $V_1^{(j)}$ (and similarly $V_2^{(j)}$) are completely determined.

When $t$ is odd ${\left(V_1^{(j)}\right)}^{\otimes 2}$ is even-dimensional so we must have both $\one_{\mD}$ and $g_j$ as subojects.  In particular, the grading is by $\Z_2\times \Z_2$ so that the $g_i$ are self-dual.  Now we can conclude that the fusion rules of the subcategory $\langle \tY_0\rangle$ are the same as $\Rep(D_{\frac{N}{2}})$ and the fusion rules involving $V_{1}^{(j)}$ (and similarly $V_2^{(j)}$) are determined from equation (\ref{eq1}).

Condensing the boson $h$ in an $8$-metaplectic modular category produces a pointed category of dimension $8$, with the same fusion rules as $\Z_8$, which we have conveniently identified with a $2$-metaplectic modular category. 
\end{proof}

\section{Conclusions and Speculations}

We have obtained two results on metaplectic modular categories.  For odd $N$, we extend the results of \cite{RW} proving property $F$ for $SO(N)_2$ to all $N$-metaplectic modular categories.  This provides some insight into the relationships among (certain) braided fusion categories with the same fusion rules.  A recent paper of Nikshych \cite{nik18} explores the different braidings that a fixed fusion category may have.  One consequence (see \cite[Remark 4.2]{nik18}) is that if a \textit{modular} category has property $F$ then any braiding on the underlying fusion category has property $F$ as well (whether the braiding in non-degenerate or not).  Of course, a fixed \textit{unitary} fusion category has a unique \emph{unitary} braiding by results of \cite{Galindounitary}, so for metaplectic categories this does not help. On the other hand, it seems to be often the case that all (finitely many, by Ocneanu rigidity \cite{eno}) fusion categories with a fixed set of fusion rules are related to each other by some type of twisting of associativity constraints (see \cite{KazWenzl,TubaWenzlBCD}, for example).  One conceptual step towards proving property $F$ would be to extend the results of \cite{nik18} to prove that braided fusion categories with a fixed set of fusion rules either all have property $F$ or all do not.

In a related direction, we have shown that the $N$-metaplectic modular categories for $8\mid N$ are obtained from $2k$- and $4k$-metaplectic modular categories (with $k\geq 1$ odd) by iteratively gauging by a non-trivial $\Z_2$-action.  Physically, this can be interpreted to mean that the systems modeled by $2^tk$-metaplectic modular categories for all $s\geq 1$ of the same parity are just different phases of the same topological order \cite{Burnell}.  It is interesting to note that the number of $2^tk$-metaplectic modular categories stabilizes for $t\geq 2$, so that the choices in the $\Z_2$-gauing process are eventually unique. Of course it is already known that any $N$-metaplectic modular category is a $\Z_2$-gauging of a pointed category (\cite{acrw,BPR,BGPR}), but this result provides an infinite sequence of categories with non-trivial Picard group (see \cite{ENO3}), i.e. non-trivial braided tensor autoequivalences.  Notice this is in contrast to the odd $N$-metaplectic modular categories: for example $N=3$ we see that $3$-metaplectic modular categories admit no non-trivial braided tensor autoequivalences.  This can be deduced from \cite{E-M}: the Brauer-Picard group of $SO(3)_2=SU(2)_4$ is $\Z_2$, with the non-trivial element corresponding to interchanging the two $\sqrt{3}-$dimensional objects.  Since the twists of these two objects are distinct, this action does not give a braided tensor autoequivalence.  Of course, failing to have a non-trivial Picard group does not preclude a category from having non-trivial (i.e. not a Deligne product) gaugings: the pointed category $\Sem$ has trivial Picard group and yet prime modular categories of the form $\mC(\Z_8,Q)$ can be obtained as $\Z_2$-gaugings of $\Sem$ \cite{BBCW}.

In the special case when $N=2^k$ we encounter degenerate (in the sense of Lie algebras) categories : an $8$-metaplectic modular category with fusion rules like $SO(8)_2$ has 3 non-trivial bosons, but they cannot be distinguished.  Condensing any of them yields a category with fusion rules like $\Z_8$ which is a $2$-metaplectic category.  If we condense the boson in any of the four $\mC(\Z_8,Q)$ theories we obtain either $\Sem$ or $\overline{\Sem}$, which we could call $\frac{1}{2}$-metaplectic.  It is worth pointing out that $SO(8)_2$ should have an $S_3$ action by braided tensor autoequivalences.  

For $N=16$ if we condense either of the two bosons that centralize a simple object of dimension $\sqrt{8}$ we obtain a $4$-metaplectic modular category, of the form $\Ising^\nu\boxtimes\Ising^\mu$ e.g. $SO(4)_2$.  It is known (\cite{BGPR}) that there are 12 inequivalent $16$-metaplectic modular categories, whereas there are $20$ with the same fusion rules as $\Ising\boxtimes\Ising$.  Which of the $20$ can appear in this way?  In this case we find that only the $12$ that are $\Z_2$-gaugings of the $4$ pointed categories $\mC(\Z_4,Q_s)$ have the correct central charge $e^{(2s+1)\pi i/4}$.  We could call these $\mC(\Z_4,Q_s)$ theories $1$-metaplectic categories as they are obtained by condensing a boson in $SO(4)_2$.  
More generally, let $k\geq 3$ be an odd number with precisely $s$ distinct prime factors.  Then there are $2^{s+2}$ inequivalent $2k$-metaplectic categories \cite{BPR} and $3\cdot 2^{s+2}$ inequivalent $2^ak$-metaplectic categories for $a\geq 2$ \cite{BGPR}.  In particular we find that the cohomological choices in the gauging process from a $2^ak$-metaplectic category to a $2^{a+2}k$-metaplectic category does not increase the number of such categories, rather their cardinality stabilizes.

\end{document}